\def\dOi{10(3:25)2014}
\subjclass{
F.4.1,  
D.1.1 
}
\theoremstyle{plain}
\newcommand{\changenote}[1]{}
\newcommand{\refl}[0]{{\rm ref}}
\newcommand{\domain}[0]{{\sf dom}}
\newcommand{\codomain}[0]{{\sf cod}}
\newcommand{\compose}[0]{{\sf cmp}}
\newcommand{\vf}[0]{{\varphi}}
\begin{document}

\title[Constructing categories and setoids]{Constructing categories and setoids of setoids in type theory}

\author[E.~Palmgren]{Erik Palmgren\rsuper a}	
\address{{\lsuper a}Stockholm University, Department of Mathematics,
  106 91 Stockholm, Sweden.}	
\email{palmgren@math.su.se}  

\author[O.~Wilander]{Olov Wilander\rsuper b}	
\address{{\lsuper b}Stockholm University, Department of Mathematics. Current affiliation: Sj\"oland \& Thyselius, Box 6238, 102 34 Stockholm. }	
\email{olov.wilander@st.se} 
\thanks{{\lsuper b}This work was  supported by  a grant (Dnr 621-2008-5076) from the Swedish Research Council (VR)}	



\keywords{Dependent type theory, setoids, formalization,  categories, constructive set theory}



\begin{abstract}
In this paper we consider the problem of building rich categories of setoids, in standard 
intensional Martin-L{\"o}f type theory (MLTT), and in particular
how to handle the problem of equality on objects in this context.  Any (proof-irrelevant) 
family $F$ of setoids over a setoid $A$ gives rise to a category  ${\mathcal C}(A, F)$ of setoids
with objects $A$.  \changenote{Rewritten rest of abstract}
We may regard the family $F$ as a setoid of setoids, and a crucial issue in this article is to construct
rich or large enough such families. Depending on closure conditions of $F$, the category ${\mathcal C}(A, F)$ has corresponding categorical constructions. 
We exemplify this with finite limits. A very large family $F$ may be obtained from Aczel's model construction of CZF in type theory. It is proved that the category so obtained
is isomorphic to the internal category of sets in this model. Set theory can thus establish (categorical) properties of ${\mathcal C}(A, F)$ which may be used in type theory.
We also show that Aczel's model construction may be extended to include the elements of any setoid as atoms or urelements. As a byproduct we obtain a natural extension of CZF, adding atoms. This extension, CZFU, is validated by the extended model. The main theorems of the paper have been checked in  the proof assistant Coq which is based on MLTT.
A possible application of this development is to integrate set-theoretic and type-theoretic reasoning in proof assistants.
\end{abstract}

\maketitle


\section{Introduction}

Martin-L{\"o}f type theory (MLTT) and its manifestations, in proof assistants such as Agda and Coq, is intended to be a framework
for formalizing (constructive) mathematics on a full scale. It is known that the intensional version of MLTT is sometimes difficult
to employ when formalizing mathematics that depends on having (propositional) equality between sets or setoids. 
This may be troublesome in parts 
of category theory \cite{P,W} where an equality on objects is a standard assumption. A typical situation is when we wish to deal with some  category of sets or setoids on equal footing to other categories. The built-in propositional equalities of type theory, given by the intensional identity types, are not extensional enough for this work without further complications. The root of the problem is that the intensional identity type of MLTT induces a non-trivial groupoid structure on types \cite{HS}. This can be avoided by introducing extra elimination axioms
like the K-axiom of Streicher \cite{St93}, the Uniqueness of Identity Proofs axiom, or weaker axioms \cite{W}. Adding these axioms is, however, an unsatisfactory solution 
according to the general philosophy of MLTT, where the elimination rule is supposed to be generated by the introduction rule.

In this paper we consider solutions to this problem within the standard intensional version of MLTT with one universe and W-types. In \changenote{Added three sentences}
Section 4.1  we recall that any (proof-irrelevant) 
family $F$ of setoids over a setoid $A$ gives rise to a category  ${\mathcal C}(A, F)$ of setoids
with objects $A$. We may regard the family $F$ as a setoid of setoids, and a crucial issue here is to construct
rich or large enough such families. Depending on closure conditions of $F$, the category ${\mathcal C}(A, F)$ has corresponding categorical constructions.   
A \changenote{Added paragraph}
first solution is to explicitly construct $F$ such that it is closed under particular constructions, here exemplified by finite limits (Section 4.2). A smoother solution is however to employ a universe $V$ of iterative sets that forms a model of Aczel-Myhill constructive  set theory (CZF), and consider the category of setoids that the 
sets of $V$ induces. This turns out to be a well-behaved category of setoids, which is isomorphic to the internal category of sets of $V$ (Theorem \ref{main}). Theorem \ref{main} allows passage between the setoids of type theory and the sets of $V$.  \changenote{Added sentence} The model and the theorem have been formalized in the proof assistant Coq, and give, in principle, a method for  formalizing further category-theoretic results in Coq that depend on a good category of sets.  This makes it possible to integrate set-theoretic and type-theoretic reasoning, where set theory has a simpler language which is better adapted to solve certain complications arising from transport functions of families of setoids, and type theory has a more direct computational interpretation. \changenote{Motivation added}

Models of CZF have previously been implemented in systems similar to Coq: in LEGO by Mendler \cite{Me} and in Agda/Alfa by Takeyama \cite{T}.
\changenote{Corrected reference} See also Hickey \cite{Hi} and Yu \cite{Y} for work  done in MetaPRL. However, we add a new twist here by allowing urelements or atoms in the model, and importantly, giving the relation to setoids, via the notion of  a $V$-representable setoid (Section
\ref{V_represent}). Our formalized model moreover allows us to embed an arbitrary setoid $M$ in a 
CZF-universe $V(M)$.  As a bonus of the construction $V(M)$, we obtain a model of CZF with atoms (elements of $M$), which is formalized as a first-order theory CZFU (Section \ref{CZFU_section}).  We end by some remarks on the formalization in Coq (Section \ref{coq_implementation}).

\section{Setoids} \label{setoid_sec}

In the following we freely use the propositions-as-types principle in the logical arguments. Thus we may speak of a proof $q$ of a proposition $Q$, meaning that $q$ is an element of type $Q$ which is written $q:Q$ as usual. In our implementation in Coq this  corresponds to
avoiding the built-in type Prop and using Set or Type for propositions. (See Section \ref{coq_implementation}.)

Recall that a {\em setoid} $A=(|A|,=_A)$ is a type $|A|$ with an equivalence
relation $=_A$. We denote the constructions associated
with proofs of reflexivity, symmetry and transitivity as follows
\begin{itemize}
\item[] $\refl(x) : x=_A x$ \quad ($x:|A|$)
\item[] $p^{-1}: y=_A x$ \quad ($x,y:|A|, p: x =_A y$)
\item[] $q \circ p:x=_A z$ \quad ($x,y,z:|A|, p: x=_A y, q: y =_A z$)
\end{itemize}
We shall often write $x \in A$ for $x:|A|$ to simplify notation.
For setoids $A$ and $B$, an {\em extensional function}
$f:A \to B$ is a pair $f=(|f|,{\rm ext}_f)$ where $|f|:|A| \to |B|$
and ${\rm ext}_f$ is a
proof-object for extensionality of the operation $|f|$, that is that
$$(\forall x,y \in A)(x =_A y \Longrightarrow |f|(x) =_B |f|(y)).$$
We write $f(x)$ for $|f|(x)$.

For setoids $A$ and $B$ denote by ${\rm Ext}(A,B)$ the setoid of extensional functions from $A$ to $B$, with point-wise
equality ($=_{\rm ext}$) as equivalence relation. \changenote{Corrected typo}
The setoids and extensional functions form an E-category, which shall be named
{\bf Setoids} here.  We recall that an E-category  $\mathcal C$ has a type of objects with no equality assumed between them. The morphisms, denoted
${\rm Hom}_{\mathcal C}(A,B)$, from object $A$ to $B$ is a setoid and the composition operation
$$\circ : {\rm Hom}_{\mathcal C}(B,C) \times {\rm Hom}_{\mathcal C}(A,B) \to {\rm Hom}_{\mathcal C}(A,C)$$
is an extensional function. The usual laws for composition and identity are supposed to be satisfied.

\begin{exa} \label{Fecat}
Let $F(x) \; (x:S)$ be a family of setoids indexed by a type $S$. Then an E-category ${\mathcal E}(S,F)={\mathcal E}$ of setoids can be formed 
whose type of objects is $S$ and where
$${\rm Hom}_{\mathcal E}(a,b) = {\rm  Ext}(F(a),F(b)).$$
\end{exa}

\begin{rem} \changenote{New remark} 
The E-category  {\bf Setoids} considered here (and elsewhere) is closely related to the exact completion of a syntactic category arising from
type theory (cf. \cite{Ca}). The match is not precise since the E-category is internal to type theory, whereas the exact completion takes place on the meta-level.
Barthe {\em et al.} \cite{BCP} compare several variants of the notion of setoid and their formalization in Coq. In their terminology we use {\em total setoids} but employ
{\bf Set}-valued equivalence relations instead of  their {\bf Prop}-valued ones.
\end{rem}

\section{Families of setoids} \label{setoidfam_sec}

A good notion of a family of setoids over a setoids is the following (compare the discussion in \cite{P}).
A {\em proof irrelevant family $F$ of setoids over a setoid $A$} --- or just {\em family of setoids} --- consists of a setoid
$F(x) = (|F(x)|, =_{F(x)})$  for each $x\in A$, 
and for $p: (x=_A y)$ an extensional function $F(p) \in {\rm Ext}(F(x),F(y))$
which satisfies the conditions (F1) -- (F3) below. \changenote{Added that $A$ is a setoid.}
\begin{enumerate}[label=(F\arabic*)]
\item $F(\refl(x))=_{\rm ext} {\rm id}_{F(x)}$ for  $x \in A$.

\item $F(p) =_{\rm ext} F(q)$ for $p,q : x=_A y$ and $x,y \in A$. This is the proof-irrelevance condition, since
$F(p)$ does not depend on $p$.

\item $F(q) \circ F(p) =_{\rm ext} F(q \circ p)$ for 
$p: x =_A y$, $q: y =_A z$ and $x,y,z \in A$.
\end{enumerate}
The function $F(p)$ is sometimes called a {\em transport function.}
Because of condition (F2), condition (F1) can be replaced by (F1') 
$$(\forall x \in A)(\forall p: x=_A x)F(p) =_{\rm ext} {\rm id}_{F(x)}$$ and condition (F3) can be replaced by (F3')
$$(\forall x,y,z \in A)(\forall p: x=_A y)(\forall q: y=_A z)(\forall r: x=_A z)F(q) \circ F(p) =_{\rm ext} F(r).$$
We shall sometimes use the notation $x \cdot p$ for $F(p)(x)$ when $F$ is clear from the context.

\medskip
As can  be seen from (F1) -- (F3) a family $F$ may be regarded as a functor (or rather E-functor) from the
discrete E-category $A^\#$, induced by $A$, to {\bf Setoids}.

\section{From  families to categories of setoids} \label{cat_from_setoidfam_sec}

It is well-known that the E-category of setoids in Martin-L\"of type theory forms a locally 
cartesian closed (LCC)  category; see \cite{H94}. It can moreover be 
shown to be a pretopos  with further properties \cite{MP}.  In fact, one can straightforwardly verify in Coq 
(see for instance \cite{PW12}) that the E-category of setoids
forms an LCC pretopos. For categories of setoids with equality on objects  the constructions are more delicate and 
this is the subject of this and the next section.

Categories can be presented in an essentially algebraic way; cf.\ \cite{MacL}. \changenote{Added} 
This is a useful formulation especial for doing category theory internally to a category, but also in set theory.
A  {\em (small) category ${\mathcal C}$} is a triple of classes (sets) ${\mathcal C}_0$, ${\mathcal C}_1$, ${\mathcal C}_2$ consisting of  {\em objects,} {\em arrows} and {\em composable arrows,}
equipped with class functions (functions) ${\sf id}: {\mathcal C}_0 \to {\mathcal C}_1$, ${\sf dom}, {\sf cod}: {\mathcal C}_1 \to {\mathcal C}_0$ and
${\sf cmp}, {\sf fst}, {\sf snd}: {\mathcal C}_2 \to {\mathcal C}_1$ that satisfy the axioms
\begin{enumerate}[label=\,\arabic*.\enspace]
\item[1.] ${\sf dom}({\sf id} (x)) = x$,
\item[2.] ${\sf cod}({\sf id} (x)) = x$,
\item[3.] ${\sf dom}({\sf cmp}(u)) =
                    {\sf dom} ({\sf fst}(u))$, 
\item[4.] ${\sf cod}({\sf cmp}(u)) =
                    {\sf cod} ({\sf snd}(u))$,
\item[$4\frac{1}{2}.$] ${\sf cod}({\sf fst}(u)) =
                    {\sf dom} ({\sf snd}(u))$,
\end{enumerate}
and
\begin{enumerate}[label=\,\arabic*.\enspace]

\item[5.] ${\sf fst}(u) = {\sf fst}(v),
                 {\sf snd}(u) ={\sf snd}(v) \implies u = v$,
\item[6.] ${\sf dom}(f) = {\sf cod}(g) \implies
    \exists u \in {\mathcal C}_2 ({\sf snd}(u) = f  \land {\sf fst}(u) = g)$,
\item[7.] ${\sf fst}(u) = {\sf id}(y) \implies
              {\sf cmp}(u) = {\sf snd}(u)$,
\item[8.] ${\sf snd}(u) = {\sf id}(x) \implies
              {\sf cmp}(u) = {\sf fst}(u)$,
\item[9.] ${\sf fst}(w) = {\sf fst}(v), {\sf snd}(v) = {\sf fst}(u), 
       {\sf snd}(u) = {\sf snd}(z),{\sf snd}(w) = {\sf cmp}(u), 
       {\sf cmp}(v) = {\sf fst}(z) \implies {\sf cmp}(w) = {\sf cmp}(z)$.
\end{enumerate}
The category {\bf Set} of sets and functions can be constructed in the standard way in ZF or CZF 
using these operations, and its categorical properties developed, following e.g. \cite{BvdB}.

We can simply obtain a type-theoretic definition by replacing sets and classes with setoids (with respective size restrictions) and functions by extensional functions.
 
A {\em functor} $F: {\mathcal B} \to {\mathcal C}$ is a triple of extensional functions $F_k: {\mathcal B}_k \to {\mathcal C}_k$, $k=0,1,2$, such
that all operations of the categories are preserved, that is
\begin{itemize}
\item[] $F_1 \circ {\sf id} = {\sf id} \circ F_0$,
\item[] $F_0 \circ {\sf dom} = {\sf dom} \circ F_1$,
\item[] $F_0 \circ {\sf cod} = {\sf cod} \circ F_1$,
\item[] $F_1 \circ {\sf fst} = {\sf fst} \circ F_2$,
\item[] $F_1 \circ {\sf snd} = {\sf snd} \circ F_2$,
\item[] $F_1 \circ {\sf cmp} = {\sf cmp} \circ F_2$. 
\end{itemize}
The axioms 1 -- 9 take a more familiar form if we rewrite them using the composition predicate ${\rm Comp}(f,g,h)$ (or $f \circ g  \equiv h$) defined by
$$(\exists u \in {\mathcal C}_2)({\sf fst}(u) = g \land {\sf snd}(u) = f \land
                {\sf cmp}(u) = h).
$$

\begin{rem}
Any category ${\mathcal C}$ may be viewed as an E-category $\overline{{\mathcal C}}$ by ignoring the equality on objects
and defining ${\rm Hom}_{\overline{{\mathcal C}}}(a,b)$ to be the setoid
$$((\Sigma f \in {\mathcal C}_1)[{\sf dom}(f) = a \land {\sf cod}(f) = b], \sim)$$
where $(f,p) \sim (f',p)$ iff $f =_{{\mathcal C}_1} f'$. Composition and identity are then defined in the obvious way
using the axioms above.

We may even consider ${\rm Hom}$ as a proof-irrelevant family over the setoid ${\mathcal C}_0 \times {\mathcal C}_0$. This gives an equivalent notion of category which is perhaps more natural in the type-theoretic language. \changenote{Added}
\end{rem}

\subsection{Construction of a category of setoids} \label{catfromfam}

We recall the following construction from \cite{P} and \cite{W}. 
Any family $F$ of setoids over a setoid $A$ gives rise to a category of setoids   
${\mathcal C}={\mathcal C}(A,F)$ in the following way. The objects are given by
the index setoid ${\mathcal C}_0=A$, and are thus equipped with equality, and the
setoid of arrows ${\mathcal C}_1$  is
$$((\Sigma x,y: |A|){\rm Ext}(F(x),F(y)),\sim)$$ 
where two arrows are equal $(x,y,f) \sim (u,v,g)$ if, and only if, there are proof objects $p: x=_A u$ and $q: y=_A v$  such that
the diagram
$$\bfig\square[F(x)`F(y)`F(u)`F(v);f`F(p)`F(q)`g]\efig$$
commutes, or equivalently
$$(\forall t \in F(x))[f(t) \cdot q =_{F(v)}  g(t \cdot p)].$$
(Note that $F(p)$ and $F(q)$ are independent of $p$ and $q$.) 
The domain and codomain maps $\domain:  {\mathcal C}_1  \rightarrow  {\mathcal C}_0 $ and  $\codomain:  {\mathcal C}_1  \rightarrow {\mathcal C}_0 $ are given by
$\domain(x,y,f) = x$ and $\codomain(x,y,f) = y$. The setoid ${\mathcal C}_2$ of composable maps is then
$$((\Sigma h, k : |{\mathcal C}_1|)[ \codomain(h) =_{{\mathcal C}_0} \domain(k)], \approx)$$
where $(h,k,p) \approx (h',k',p')$ if and only if $h \sim h'$ and $k \sim k'$. The composition map $\compose: {\mathcal C}_2 \to {\mathcal C}_1$ is given
by 
$$\compose((x,y,f),(u,v,g),p) =_{\rm def} (x,v, g \circ F(p) \circ f).$$
Furthermore, let
$${\sf fst}((x,y,f),(u,v,g),p) =_{\rm def} (x,y,f) \qquad  {\sf snd}((x,y,f),(u,v,g),p) =_{\rm def} (u,v,g).$$
It is straightforward to verify
\begin{thm} \label{cafthm}
If $F$ is a family of setoids over a setoid $A$, then  ${\mathcal C}={\mathcal C}(A,F)$  is a small category. \qed
\end{thm}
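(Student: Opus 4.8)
The plan is to verify directly that the data $(\mathcal{C}_0,\mathcal{C}_1,\mathcal{C}_2)$ together with the operations $\idmap,\domain,\codomain,\compose,\fst,\snd$ satisfies axioms 1--9. The work splits into three parts: (a) checking that $\sim$ and $\approx$ are equivalence relations, so that $\mathcal{C}_1$ and $\mathcal{C}_2$ really are setoids; (b) checking that each structure map is an extensional function, i.e.\ respects these equalities; and (c) checking the nine axioms. Throughout, the coherence conditions (F1)--(F3) of the family $F$ do all the real work, and in particular proof-irrelevance (F2) is what makes the transport functions behave as if the proof objects were invisible.

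For part (a), reflexivity of $\sim$ on $(x,y,f)$ is witnessed by $p=\refl(x)$, $q=\refl(y)$, using (F1) to see that $F(\refl(y))\circ f =_{\rm ext} f =_{\rm ext} f \circ F(\refl(x))$; symmetry and transitivity follow by inverting and composing the witnessing proofs and pasting the commuting squares, where (F3) is used to merge the transport functions along adjacent edges. The relation $\approx$ on $\mathcal{C}_2$ is an equivalence relation because it is defined componentwise from $\sim$. Setting $\idmap(x)=(x,x,\idmap_{F(x)})$, the easy cases of part (b) are then immediate: if $(x,y,f)\sim(u,v,g)$ there exist by definition proofs $p:x=_A u$ and $q:y=_A v$, which are exactly what is needed to conclude $\domain(x,y,f)=_A\domain(u,v,g)$ and $\codomain(x,y,f)=_A\codomain(u,v,g)$; extensionality of $\idmap$, $\fst$ and $\snd$ is equally direct from the definitions of $\sim$ and $\approx$.

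The crux of part (b) is extensionality of composition, and this is the step I expect to be the main obstacle. Suppose $((x,y,f),(u,v,g),p)\approx((x',y',f'),(u',v',g'),p')$, so $(x,y,f)\sim(x',y',f')$ via $a:x=_A x'$, $b:y=_A y'$ and $(u,v,g)\sim(u',v',g')$ via $c:u=_A u'$, $d:v=_A v'$. One must produce witnesses showing $(x,v,g\circ F(p)\circ f)\sim(x',v',g'\circ F(p')\circ f')$, namely $F(d)\circ g\circ F(p)\circ f =_{\rm ext} g'\circ F(p')\circ f'\circ F(a)$. Rewriting the left side by the square for $(u,v,g)$ gives $g'\circ F(c)\circ F(p)\circ f = g'\circ F(c\circ p)\circ f$ by (F3); rewriting the right side by the square for $(x,y,f)$ gives $g'\circ F(p')\circ F(b)\circ f = g'\circ F(p'\circ b)\circ f$, again by (F3). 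Since $c\circ p$ and $p'\circ b$ are both proofs of $y=_A u'$, proof-irrelevance (F2) yields $F(c\circ p)=_{\rm ext}F(p'\circ b)$, and the two sides agree. Every other verification is a special case of this same bookkeeping; the delicate point is aligning the composite proof objects so that (F2) applies.

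Finally, for part (c): axioms 1--4 and $4\frac{1}{2}$ hold definitionally from the formulas for $\domain$ and $\codomain$ on triples; axiom 5 is immediate from the componentwise definition of $\approx$; and axiom 6 constructs the required composable pair $u\in\mathcal{C}_2$ directly from the composability proof $\domain(f)=_A\codomain(g)$. The identity laws (axioms 7 and 8) and the associativity law (axiom 9) are then proved by exactly the method used for composition: expand both $\compose$-terms into composites of transport functions and underlying extensional functions, use (F3) to merge adjacent transports, invoke (F2) to identify transports along equal-but-distinct proofs of the same equation, and use (F1) to collapse reflexivity transports to identities. No argument beyond this pattern is required, which is why the verification, though lengthy, is routine.
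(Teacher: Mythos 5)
Your proposal is correct and matches the paper's intent: the paper states the result as a straightforward verification (carried out in its Coq formalization) of exactly the data and axioms 1--9 you check, with the coherence conditions (F1)--(F3) doing the work in precisely the way you describe, most notably in the extensionality-of-composition step where two transports along distinct proofs of the same equation are identified by (F2). The only slight looseness is calling axiom $4\frac{1}{2}$ ``definitional''---it actually holds via the proof component $p$ built into the data of $\mathcal{C}_2$---but this is immediate and does not affect the argument.
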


\begin{lem} \label{compchar}
In the category ${\mathcal C}(A,F)$ the composition predicate ${\rm Comp}$ may be characterized as follows
$${\rm Comp}((c,d,g),(a,b,f),{\bf h}) \Longleftrightarrow (\exists r: b =_A c)  (a,d,g \circ F(r) \circ f) \sim {\bf h}.$$
If $b$ and $c$ are definitionally equal, then $F(r)$ is the identity map.  \qed
\end{lem}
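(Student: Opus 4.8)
The plan is to unfold the definition of the composition predicate and compare it directly with the proposed explicit formula. By definition, ${\rm Comp}((c,d,g),(a,b,f),{\bf h})$ asserts the existence of a composable pair $u \in {\mathcal C}_2$ with ${\sf fst}(u) = (a,b,f)$, ${\sf snd}(u) = (c,d,g)$ and ${\sf cmp}(u) = {\bf h}$, where $=$ is equality in the arrow setoid ${\mathcal C}_1$, namely $\sim$. Writing a general element of ${\mathcal C}_2$ as $u = ((x,y,f'),(u',v,g'),p)$ with $p : y =_A u'$, the definitions give ${\sf fst}(u) = (x,y,f')$, ${\sf snd}(u) = (u',v,g')$ and ${\sf cmp}(u) = (x,v,\, g' \circ F(p) \circ f')$. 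I would then prove the two implications separately.

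For the implication from right to left, given $r : b =_A c$ with $(a,d,\, g \circ F(r) \circ f) \sim {\bf h}$, I would exhibit the witness $u = ((a,b,f),(c,d,g),r)$; this is well-formed since its composability proof is exactly $r : \codomain(a,b,f) =_A \domain(c,d,g)$. Then ${\sf fst}(u) = (a,b,f)$ and ${\sf snd}(u) = (c,d,g)$ hold on the nose, while ${\sf cmp}(u) = (a,d,\, g \circ F(r) \circ f) \sim {\bf h}$ by hypothesis, so ${\rm Comp}$ holds.

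For the implication from left to right I would extract the witness $u$ above together with the two $\sim$-relations it satisfies, which unfold to proof objects $p_1 : x =_A a$, $q_1 : y =_A b$ with $F(q_1) \circ f' = f \circ F(p_1)$, and $p_2 : u' =_A c$, $q_2 : v =_A d$ with $F(q_2) \circ g' = g \circ F(p_2)$. I would then set $r = p_2 \circ p \circ q_1^{-1} : b =_A c$ and claim $(a,d,\, g \circ F(r) \circ f) \sim (x,v,\, g' \circ F(p) \circ f')$ witnessed by $p_1^{-1} : a =_A x$ and $q_2^{-1} : d =_A v$; since $(x,v,\, g' \circ F(p) \circ f') = {\sf cmp}(u) \sim {\bf h}$, transitivity of $\sim$ then yields the required $(a,d,\, g \circ F(r) \circ f) \sim {\bf h}$.

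The heart of the argument, and the step I expect to demand the most care, is this last $\sim$-verification, where the family axioms enter. Solving the two square equations for $f'$ and $g'$ (using $F(q)^{-1} = F(q^{-1})$) and substituting into $g' \circ F(p) \circ f'$, repeated use of (F3) to amalgamate adjacent transport functions collapses the expression to $F(q_2^{-1}) \circ g \circ F(p_2 \circ p \circ q_1^{-1}) \circ f \circ F(p_1) = F(q_2^{-1}) \circ g \circ F(r) \circ f \circ F(p_1)$. The square equation to be checked, $F(q_2^{-1}) \circ (g \circ F(r) \circ f) = (g' \circ F(p) \circ f') \circ F(p_1^{-1})$, then reduces to cancelling $F(p_1) \circ F(p_1^{-1})$, which equals $F(p_1 \circ p_1^{-1}) = {\rm id}_{F(a)}$ by (F3') together with (F1). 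Beyond keeping every transport function's source and target objects straight so that all composites stay well-typed, nothing here is substantial. Finally, if $b$ and $c$ are definitionally equal one may take $r = \refl(b)$, whence $F(r) =_{\rm ext} {\rm id}_{F(b)}$ by (F1) and the explicit composite reduces to $g \circ f$.
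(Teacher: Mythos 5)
Your proof is correct, and since the paper states this lemma with the proof omitted as a straightforward verification, your direct unfolding of ${\mathcal C}_2$, ${\sf fst}$, ${\sf snd}$, ${\sf cmp}$ and the relation $\sim$ is exactly the intended argument: the right-to-left witness $((a,b,f),(c,d,g),r)$, the extraction of $r = p_2 \circ p \circ q_1^{-1}$ in the converse, and the cancellation of transports via (F1)--(F3) all check out. The only cosmetic remark is that for the final claim you need not choose $r = \refl(b)$: by proof irrelevance (F2) together with (F1), \emph{any} $r : b =_A c$ has $F(r) =_{\rm ext} {\rm id}$ when $b$ and $c$ coincide.
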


\subsection{Closure conditions on families and categories} \changenote{Made into new subsection}

An important property of the category of sets is that the terminal object (i.e.\ the singleton set) generates
the category. In such categories it is possible to interpret the internal logic in terms of elements; see \cite{P12}.
We recall some definitions before proving that ${\mathcal C}(A,F)$ has the same property. \changenote{Added introductory sentences}

Let ${\mathcal D}$ be a category with terminal object $1$. Recall that an {\em element} of an object $X$ is
an arrow $x:1 \to X$. An arrow $f:X \to Y$ of the category is here called
{\em onto } if for every $y:1 \to Y$, there is some $x:1 \to X$ with $f \circ x = y$. The arrow is as usual {\em mono}
if for any $g,h:U \to X$ in ${\mathcal D}$, $fg=fh$ implies $g=h$. \changenote{Added sentence to explain mono.} If each arrow $f:X \to Y$ in $\mathcal D$
that is both onto and mono, is also an isomorphism, then we say that $1$ is a {\em strong generator} for $\mathcal D$. 
For a family $F$ of setoids over $A$, we say that {\em $c \in A$ represents a setoid $C$} if $F(c)$ is isomorphic to $C$.
We also say that {\em $F$ contains $C$ }(up to isomorphism).\changenote{Added two sentences}
The category ${\mathcal C}(A,F)$ has a strong generator whenever the family $F$ contains the terminal object.
This follows from the straightforwardly proven result. Note that part (d) uses the  type-theoretic choice principle coming from $\Sigma$-elimination.
\changenote{Changed Lemma to Proposition}

\begin{prop} Let $F$ be a family of setoids indexed by the setoid $A$, and suppose that $c \in A$ represents the terminal setoid.
Then 
\begin{enumerate}[label=\({\alph*}] \label{generator}
\item $c$ is the terminal object in ${\mathcal C}(A,F)$.
\item If $(a,b,f)$ is an arrow of ${\mathcal C}(A,F)$ then it is mono 
if and only if $f:F(a) \to F(b)$ is injective.
\item If $(a,b,f)$ is an arrow of ${\mathcal C}(A,F)$ then it is onto 
if and only if $f:F(a) \to F(b)$ is surjective.
\item The terminal object of  ${\mathcal C}(A,F)$ is a strong generator for the category. \qed
\end{enumerate}
\end{prop}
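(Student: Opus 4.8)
The plan is to handle the four parts in order, in each case reducing a categorical property of an arrow $(a,b,f)$ to a concrete property of the underlying extensional function $f$. The single fact that drives everything is that $F(c)$ behaves like a one-point setoid: since $F(c)$ is isomorphic to the terminal setoid it is inhabited, so I fix an element $\ast \in F(c)$ and record that every element of $F(c)$ is $=_{F(c)} \ast$. For part (a) I would show $c$ is terminal by exhibiting, for each object $a$, the arrow $(a,c,k_a)$ where $k_a \in {\rm Ext}(F(a),F(c))$ is the constant function with value $\ast$ (extensionality is immediate). Uniqueness is forced: for any two arrows $(a,c,f)$ and $(a,c,g)$ and any $t \in F(a)$ one has $f(t) =_{F(c)} \ast =_{F(c)} g(t)$, so $f =_{\rm ext} g$ and the arrows are $\sim$-related via $\refl(a)$ and $\refl(c)$.

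The core of the argument is a dictionary identifying elements $x : c \to a$ with elements of the setoid $F(a)$. Any such arrow has, after choosing a representative of the form $(c,a,\psi)$, a well-defined value $\psi(\ast) \in F(a)$; conversely each $v \in F(a)$ yields the element $\bar v =_{\rm def} (c,a,k_v)$ with $k_v$ the constant function $u \mapsto v$. Two elements agree as arrows exactly when their values agree in $F(a)$, again because all inputs from $F(c)$ are $=_{F(c)} \ast$ and transport along $\refl$ is the identity by (F1). Under this identification, composition with $(a,b,f)$ sends the element corresponding to $v$ to the one corresponding to $f(v)$: by Lemma \ref{compchar} the composite is $(c,b,f \circ F(\refl) \circ k_v) \sim (c,b,k_{f(v)})$.

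With this dictionary, parts (b) and (c) become direct translations. For (b), if $(a,b,f)$ is mono and $f(s) =_{F(b)} f(t)$, then $\bar s, \bar t : c \to a$ have equal composites with $(a,b,f)$ (both corresponding to $f(s) =_{F(b)} f(t)$), so $\bar s \sim \bar t$ and hence $s =_{F(a)} t$; conversely, if $f$ is injective and $(a,b,f) \circ g \sim (a,b,f) \circ h$ for $g,h : U \to a$, then choosing representatives $(U,a,g_0)$ and $(U,a,h_0)$ and collapsing the transports along $\refl$ by proof-irrelevance yields $f(g_0(u)) =_{F(b)} f(h_0(u))$ for all $u$, whence $g_0 =_{\rm ext} h_0$ and $g \sim h$. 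Part (c) is the analogous translation between \emph{onto} and surjectivity of $f$: given an element $(c,b,\phi)$ one uses surjectivity to produce a preimage $v$ of $\phi(\ast)$, sets $x = \bar v$, and observes $\phi =_{\rm ext} k_{f(v)}$ so that $(a,b,f) \circ x$ equals the given element.

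Finally, (d) follows by combining (b) and (c): an arrow $(a,b,f)$ that is both onto and mono has $f$ both surjective and injective. The only non-routine step, and the main obstacle, is constructing an extensional two-sided inverse, and here I would invoke the type-theoretic choice principle coming from $\Sigma$-elimination. Surjectivity, read as $(\forall w \in F(b))(\exists v \in F(a))(f(v) =_{F(b)} w)$, yields an operation $g_0 : |F(b)| \to |F(a)|$ with $f(g_0(w)) =_{F(b)} w$. Extensionality of $g_0$ and the identity $g_0(f(v)) =_{F(a)} v$ both follow from injectivity of $f$, so $(b,a,g_0)$ is an inverse arrow to $(a,b,f)$ and the latter is an isomorphism; hence $c$ is a strong generator.
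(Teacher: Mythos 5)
Your proof is correct and matches the paper's intended argument: the paper leaves the verification as straightforward, noting only that part (d) requires the type-theoretic choice principle from $\Sigma$-elimination, which is precisely where and how you invoke it (extracting the inverse $g_0$ from the surjectivity statement and then deriving its extensionality from injectivity). The element-dictionary between arrows $c \to a$ and elements of $F(a)$, with transports collapsed via (F1)/(F2) and Lemma \ref{compchar}, is exactly the expected reduction of the categorical notions to properties of the underlying functions.
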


If the family $F$ is a universe, we get a category ${\mathcal C}(A,F)$ with closure conditions depending on the type-theoretic closure conditions  of the universe. In \cite{MP} it was shown that by letting $A,F$ be a particular universe of $U$-small setoids, the category is a locally cartesian closed pretopos with $W$. By a such a universe we mean that,  for each $a \in A$, $F(a)$ is a setoid where both the underlying type $|F(a)|$ and the truth-values of $x=_{F(a)} y$ are in the type theoretic universe $U$. \changenote{Added explanation}
However, the construction of $A$ and $F$ in that paper used constructions going outside standard intensional type theory, in fact, a tacit assumption was made of a principle (see \cite[Theorem 5.2]{P}) which is equivalent to Uniqueness of Identity Proofs, which, in turn, is false in the groupoid model. In \cite{W} a somewhat weaker axiom is proposed, which may possibly let the constructions of \cite{MP} go through.
We have constructed (in Coq) a graded universe of setoids $A_{\omega},F_{\omega}$, with no transfinite types, but closed under grade bounded $\Pi$ and $\Sigma$, as well as sums and coequalizers, to be able to mimic constructions of categorical universes in extensional type theory \cite{Ma}. However the expected categorical properties  of ${\mathcal C}(A_{\omega},F_{\omega})$ have turned out quite difficult to verify formally. 
\changenote{Deleted a sentence} In the next subsection we present instead a method to taylor particular categorical universes which is more manageable.

\subsection{Direct construction of categorical universes}
 \changenote{added new section about alternate construction}

We present a method for constructing categories of setoids closed under particular constructions, and exemplify with
the construction of pullbacks.

Let $S$ be the inductive type defined by the rules
\begin{equation} \label{stages}
\frac{}{{\sf b}:S} \qquad \frac{i:S \quad j:S \quad k:S}{{\sf p}(i,j,k):S}.
\end{equation}
The identity on this type is decidable and satisfies
$$I(S,{\sf p}(i,j,k),{\sf p}(i',j',k')) \Longleftrightarrow I(S,i,i') \land I(S,j,j') \land I(S,k,k').$$
This forms the setoid of {\em construction stages}. The symbol ${\sf b}$ signifies the basic stage.

Let $G$ be any family of setoids indexed by a setoid $B$.
Then define by recursion on $s\in S$, $A_s$ and $F_s$, such that $A_s$ is a setoid and $F_s$ is family of setoids on $A_s$. 

Let $A_{\sf b} = B$ and $F_{\sf b} =G$.

Let 
\begin{equation}
\begin{split}
A_{{\sf p}(i,j,k)} = & \Bigl((\Sigma a:A_i)(\Sigma b:A_j)(\Sigma c:A_k)(\Sigma d:A_k) \\
 &\qquad \bigl[(c=_{A_i}d) \times (F_i(a) \to F_k(c))\times (F_j(b) \to F_k(d))\bigr], \sim\Bigr)
\end{split}
\end{equation}
where
\begin{equation}
(a,b,c,d,q,f,g) \sim (a',b',c',d',q',f',g')
\end{equation}
is given by
\begin{equation} \label{speq1}
\begin{split}
 &(\exists p_1:a=_{A_i} a')(\exists p_2:b=_{A_j} b')(\exists p_3:c=_{A_k} c')(\exists p_4:d=_{A_k} d')\\
& \qquad F_k(p_3) \circ f = f' \circ F_i(p_1) \land F_k(p_4) \circ g= g' \circ F_j(p_2)
\end{split}
\end{equation}
and then
$$F_{{\sf p}(i,j,k)}(a,b,c,d,p,f,g) = \Bigl((\Sigma x:F_i(a))(\Sigma y:F_j(b))\bigl[F_k(p)(f(x)) =_{F_k(d)} g(y))\bigr], \approx \Bigr). $$
where
$$(x,y,r) \approx (x',y',r') \Longleftrightarrow_{\rm def} x=_{F_i(a)} x' \land y =_{F_j(b)} y'.$$
For $(p_1,p_2,p_3,p_4,q_1,q_2):(a,b,c,d,p,f,g) \sim (a',b',c',d',p',f',g')$, define
$$F_{{\sf p}(i,j,k)}(p_1,p_2,p_3,p_4,q_1,q_2): F_{{\sf p}(i,j,k)}(a,b,c,d,p,f,g) \to F_{{\sf p}(i,j,k)}(a',b',c',d',p',f',g')$$
by letting
$$F_{{\sf p}(i,j,k)}(p_1,p_2,p_3,p_4,q_1,q_2)(x,y,r) =(F_i(p_1)(x),F_j(p_2)(y),r')$$
where $r'$ is some proof of 
$$F_k(p') (f'(F_i(p_1)(x)))=_{F_k(d')}g'(F_j(p_2)(y))$$
(that can be obtained from (\ref{speq1})). It is straightforward to check that $F_{{\sf p}(i,j,k)}$ is a family of setoids over $A_{{\sf p}(i,j,k)}$.
Moreover the following is a pullback square in the E-category of setoids
\begin{equation} \label{sppb}
\bfig
\square<1000,500>[F_{{\sf p}(i,j,k)}(a,b,c,d,p,f,g)`F_j(b)`F_i(a)`F_k(d);\pi_2`\pi_1`g`F_k(p) \circ f ],
\efig
\end{equation}
where $\pi_1(x,y,r)=x$ and $\pi_2(x,y,r)=y$.

Define using $I$-elimination, for $p:I(A,s,s')$,
$${\rm transport}_{\lambda s. A_s}(p): A_s \to A_{s'}$$ 
by letting $C(s,s',p)=_{\rm def}A_s \to A_{s'}$ and ${\rm transport}_{\lambda s. A_s}({\sf ref}(s)) = \lambda x:A_s.x$.

Finally we define 
$$A_{\omega} =((\Sigma s:S)A_s, =_{\omega})$$
where 
$$(s,a) =_{\omega} (s',a') \Longleftrightarrow_{\rm def} (\exists p: I(S,s,s')) {\rm transport}_{\lambda s. A_s}(p)(a)=_{A_{s'}} a'$$
and
$$F_{\omega}(s,a)= F_s(a),$$
and further for $(p,q): (s,a) =_{\omega} (s',a')$, we define
$$F_{\omega}(p,q)(x) = F_{s'}(q)(H_{s,s',p}(x)),$$
where
$H_{s,s',p}: F_s(a) \to F_{s'}({\rm transport}_{\lambda s. A_s}(p)(a))$
is obtained by $I$-elimination with $$C(s,s',p)=_{\rm def} F_s(a) \to F_{s'}({\rm transport}_{\lambda s. A_s}(p)(a))$$ and
$H_{s,s,{\sf ref}}(s)= \lambda x:F_s(a). x$.
The identity type of $S$ is decidable, so it enjoys the Uniqueness of Identity Proofs property by Hedberg's theorem \cite{P}. Then one may easily verify that $F_{\omega}$ is a family of setoids over $A_{\omega}$. Furthermore the category ${\mathcal C}= {\mathcal C}(A_{\omega},F_{\omega})$ given by this family has chosen pullbacks, which means that
there are two extensional functions $ {\rm p}_1,{\rm p}_2:{\rm M}({\mathcal C}) \to {\rm Ob}({\mathcal C})$ defined on the setoid of arrows with common codomain
$${\rm M}({\mathcal C})= \{(f,g) \in {\rm Arr}({\mathcal C})^2 : {\rm cod}(f) =_{{\rm Ob}({\mathcal C})} {\rm cod}(g) \}$$
such that for all $(f,g) \in {\rm M}({\mathcal C})$,
$$\bfig\square[\cdot`\cdot`\cdot`\cdot;{\rm p}_2(f,g)`{\rm p}_1(f,g)`g`f]\efig$$
is a pullback. Using these constructions it is now possible to verify:

\begin{thm} \label{haspullbacks}
The category ${\mathcal C}(A_{\omega},F_{\omega})$ has chosen pullbacks. \qed
\end{thm}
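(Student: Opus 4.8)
The plan is to read off the chosen pullback of a cospan directly from the stage constructor ${\sf p}$, and then to transport the fact that (\ref{sppb}) is already a pullback in {\bf Setoids} into the category ${\mathcal C}={\mathcal C}(A_\omega,F_\omega)$. So suppose $(f,g)\in{\rm M}({\mathcal C})$ with $f=(X,Z,\varphi)$, $g=(Y,Z',\psi)$, $X=(s_X,a_X)$, $Y=(s_Y,a_Y)$, $Z=(s_Z,a_Z)$, $Z'=(s_{Z'},a_{Z'})$ and a witness $e:Z=_\omega Z'$. First I would remove the mismatch between the two codomains: since the transport $F_\omega(e):F_\omega(Z)\to F_\omega(Z')$ is an isomorphism by (F1) and (F3), I replace $g$ by $g'=(Y,Z,F_\omega(e^{-1})\circ\psi)$, and a short computation with Lemma~\ref{compchar} shows $g'\sim g$. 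Thus I may assume that $f$ and $g'$ share the single codomain object $Z$.

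I then form the apex as an element of a ${\sf p}$-stage. Put $w=(a_X,a_Y,a_Z,a_Z,\refl(a_Z),\varphi,\psi')\in A_{{\sf p}(s_X,s_Y,s_Z)}$, where $\psi'$ is the underlying map of $g'$, and take $P=({\sf p}(s_X,s_Y,s_Z),w)\in A_\omega$. By construction $F_\omega(P)=F_{{\sf p}(s_X,s_Y,s_Z)}(w)$ is exactly the pullback setoid of $\varphi$ and $\psi'$, because the transport $F_{s_Z}(\refl(a_Z))$ occurring in its definition is the identity by (F1). The projections $\pi_1,\pi_2$ of (\ref{sppb}) then furnish the two chosen arrows ${\rm p}_1(f,g)=(P,X,\pi_1)$ and ${\rm p}_2(f,g)=(P,Y,\pi_2)$ of ${\mathcal C}$.

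To verify the universal property I would pass to the underlying setoids. For fixed objects $U,V$ one has ${\rm Hom}_{\mathcal C}(U,V)\cong{\rm Ext}(F_\omega(U),F_\omega(V))$: by (F1)--(F2) every witness $p:U=_\omega U$ satisfies $F_\omega(p)={\rm id}$, so for fixed endpoints $(U,V,\phi)\sim(U,V,\phi')$ reduces to $\phi=_{\rm ext}\phi'$; and by Lemma~\ref{compchar} composition with a definitionally equal intermediate object is composition of the underlying maps. Hence a cone over the cospan from a test object $W$ is the same datum as a cone in {\bf Setoids} over $(\varphi,\psi')$ from $F_\omega(W)$, and mediating arrows $W\to P$ correspond to extensional functions $F_\omega(W)\to F_\omega(P)$ respecting the projections. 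Since (\ref{sppb}) exhibits $F_\omega(P)$ as the pullback of $\varphi$ and $\psi'$ in {\bf Setoids}, the existence and uniqueness of the mediator in ${\mathcal C}$ follow from those at the setoid level, the mediator having underlying map $t\mapsto(u(t),v(t),\rho_t)$ with $\rho_t$ the proof supplied by the cone equation; commutativity of the chosen square is immediate from the defining condition of the pullback setoid.

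The step I expect to be the real obstacle is the requirement that ${\rm p}_1$ and ${\rm p}_2$ be \emph{extensional}, that is, that equal cospans yield equal chosen data. Given $(f_1,g_1)=(f_2,g_2)$ in ${\rm M}({\mathcal C})$ one unfolds $f_1\sim f_2$ and $g_1\sim g_2$ into identifications $X_1=_\omega X_2$, $Y_1=_\omega Y_2$, $Z_1=_\omega Z_2$ together with commuting transport squares for the underlying maps. Applying the constructor ${\sf p}$ to the three $S$-components of these identifications gives $I(S,{\sf p}(s^{X}_1,s^{Y}_1,s^{Z}_1),{\sf p}(s^{X}_2,s^{Y}_2,s^{Z}_2))$ by congruence, so the two apices inhabit identified stages. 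The delicate part is the element-level equality $P_1=_\omega P_2$: one must produce the four proofs $p_1,\dots,p_4$ and the two commuting conditions demanded by (\ref{speq1}) out of the $\sim$-data of $f_1\sim f_2$ and $g_1\sim g_2$, and then reconcile the transport functions along the $S$-identity used by $=_\omega$ with those coming from (\ref{speq1}). Here the decidability of $I(S,-,-)$, hence Uniqueness of Identity Proofs on $S$ through Hedberg's theorem (already invoked to make $F_\omega$ a family), is precisely what makes the transports coherent and lets the diagram chase close. The same bookkeeping shows the two projection arrows are respected, completing the proof that the pullbacks are chosen.
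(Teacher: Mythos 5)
Your proposal is correct and takes essentially the same route as the paper: the paper's proof of Theorem \ref{haspullbacks} is exactly the Section 4.3 construction (apex at a ${\sf p}$-stage, the setoid-level pullback square (\ref{sppb}), transfer of the universal property using Lemma \ref{compchar}, and UIP on $S$ via Hedberg's theorem to make the transports coherent in the extensionality check), which you assemble in the intended way. The only cosmetic deviation is that you first normalize the cospan to a single codomain and use the degenerate slots $(c,d,q)=(a_Z,a_Z,\refl(a_Z))$, whereas the two separate codomain slots with the proof $q$ in $A_{{\sf p}(i,j,k)}$ were designed precisely so that the cospan can be stored without pre-composing $\psi$ with the transport $F_\omega(e^{-1})$; by proof-irrelevance (F2) the two variants give equal apices, so both close the argument the same way.
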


We expect that it should be possible to extend the construction above to other properties (e.g.\ LCC pretoposes) by adding new construction stages to (\ref{stages}). The formal verification will probably be quite cumbersome. However it is possible to obtain rich categorical universes that are smoother to construct and verify. In the next section we show that chosing $A$ and $F$ to be induced by the Aczel universe $V$ of iterative sets, the category ${\mathcal C}(A,F)$ gets good categorical properties; see Theorem \ref{main}.

\section{Aczel's iterative sets and setoids}

It is known that the category of sets inside  Constructive Zermelo-Fraenkel set theory (CZF) has good category-theoretic properties \cite{BvdB}. These can preferably be established on basis of the essentially algebraic formulation of categories given in Section 4. \changenote{Added sentence.} Aczel \cite{A}  presented a model of CZF in MLTT. 
This suggests that we may use such models of CZF to build useful categories for type theory. \changenote{Corrected typo}
 The model builds on the iterative conception of set, which is to say,
a set is a, possibly infinite, well-founded tree, and where equality of sets is defined in terms of bisimulation. 

\subsection{Iterative sets with urelements} \label{VM}

We consider here a modification of Aczel's standard model of CZF, to be able to add urelements or atoms.
For a universe $U,T(\cdot)$, and a setoid $M =(|M|,=_M)$ (of urelements), the set-theoretic 
universe $V(M)=V$ is inductively defined by the rules
$$\frac{a:U \quad f:T(a) \to V}{\sup(a,f):V} \qquad \frac{b: |M|}{{\rm atom}(b):V}.$$
The equality $=_V$ is the smallest relation satisfying the two rules

$$\frac{\forall x:T(a).\exists y:T(b). f(x) =_V g(y) \quad  \forall y:T(b).\exists x:T(a). f(x) =_V g(y)}{ \sup(a,f) =_V \sup(b,g) }$$

$$\frac{a =_M b}{{\rm atom}(a) =_V {\rm atom}(b)}.$$
The membership relation is defined by
$$u \in_V \sup(a,f) \Longleftrightarrow \exists x:T(a). u=_V f(x)$$
and declaring $u \in_V {\rm atom}(b)$ to be false. We have $a=_M b$ iff ${\rm atom}(a) =_V {\rm atom}(b)$, so that equality of atoms is exactly that of the setoid.  The standard model is the special case when $M$ is the empty setoid (no atoms).

\medskip
We say that a  setoid {\em $M=(|M|,=_M)$ belongs to the universe $U$} if there is some $m : U$ with $|M|=T(m)$, and some
$e: |M| \to |M| \to U$ such that for all $x,y: |M|$,
$$x=_M y \Longleftrightarrow T(e(x,y)).$$
For such setoids we have:

\begin{lem} \label{smallness}
If $M$ is a setoid which belongs to $U$, then the relations $x =_V y$  and $x \in_V y$ are propositions in $U$.
\end{lem}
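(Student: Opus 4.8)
The plan is to prove both relations $U$-small by producing, for each pair of arguments, a code in $U$ whose $T$-decoding is logically equivalent to the relation in question, relying on the closure of the universe under $\Pi$ and $\Sigma$ and on the presence of a code for the empty type. Since $\in_V$ is defined from $=_V$, I treat equality first and then read off membership.

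First I would define, by recursion on the (well-founded, inductive) tree structure of its two arguments, a code $\mathrm{eq}(x,y):U$ intended to stand for $x=_V y$. On two suprema I set $\mathrm{eq}(\sup(a,f),\sup(b,g))$ to be the $U$-code for
$$\bigl(\Pi x:T(a).\,\Sigma y:T(b).\,T(\mathrm{eq}(f(x),g(y)))\bigr)\times\bigl(\Pi y:T(b).\,\Sigma x:T(a).\,T(\mathrm{eq}(f(x),g(y)))\bigr),$$
which lies in $U$ because $a,b:U$ (so $T(a),T(b)$ may be quantified over) and because the recursive calls are at the structurally smaller trees $f(x),g(y)$. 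On two atoms I put $\mathrm{eq}(\mathrm{atom}(a),\mathrm{atom}(b))=e(a,b)$, using that $M$ belongs to $U$; and on a supremum paired with an atom I put a code for the empty type, reflecting that no generating rule relates a set to an atom.

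Then I would establish $x=_V y \Longleftrightarrow T(\mathrm{eq}(x,y))$. The implication from left to right uses that $=_V$ is the smallest relation closed under the two rules, i.e.\ its induction principle: it suffices to check that $\lambda x\,y.\,T(\mathrm{eq}(x,y))$ is closed under both rules, and in the supremum case the decoded product is by construction exactly the conjunction of the two premises, while the atom case is immediate from the $M$-equivalence. The converse implication goes by well-founded induction on the trees: decoding $\mathrm{eq}$ at a pair of suprema yields the two premises of the first rule with the inner equalities presented $U$-codedly, the induction hypothesis upgrades these to genuine $=_V$-proofs, the rule then fires, the atom case is again the $M$-equivalence, and the mixed case is vacuous since the empty type is uninhabited.

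Finally, membership follows quickly: $u\in_V\sup(a,f)$ is by definition $(\exists x:T(a))\,u=_V f(x)$, which by the equality result and closure under $\Sigma$ is coded by $(\Sigma x:T(a))\,\mathrm{eq}(u,f(x))$, while $u\in_V\mathrm{atom}(b)$ is false and coded by the empty type. I expect the main obstacle to be reconciling the least-fixpoint character of $=_V$ with the recursively defined code: one direction must appeal to the inductive elimination of $=_V$ while the other rests on structural recursion over $V$, and the two unfoldings of the supremum clause must be matched so that the decoded $\Pi/\Sigma$-code lines up precisely with the premises of the generating rule; keeping the proof-irrelevant reading of the quantifiers consistent throughout is where the care is needed.
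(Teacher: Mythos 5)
Your proposal is correct and follows the standard Aczel-style argument that the paper implicitly relies on; the paper itself states this lemma without giving any proof. Defining a $U$-valued code $\mathrm{eq}(x,y)$ by recursion on the tree structure (using closure of $U$ under $\Pi$, $\Sigma$ and products, the code $e$ witnessing that $M$ belongs to $U$, and an empty-type code for the mixed $\sup$/atom case) and then proving $x =_V y \Longleftrightarrow T(\mathrm{eq}(x,y))$ — by minimality of the inductively defined $=_V$ in one direction and by well-founded induction on trees in the other, with $\in_V$ then handled by a $\Sigma$-code — is exactly the expected argument, and your identification of the least-fixed-point versus structural-recursion mismatch as the crux is precisely the point that makes the lemma require proof under this paper's inductive definition of $=_V$.
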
 

It is crucial that the basic relations $\in$ and $=$ are interpreted as propositions in the universe $U$ in order
to be able to verify that all bounded formulas ($\Delta_0$-formulas) may be used in the separation scheme of CZF.   We will thus  consider $V(M)$ where the setoid
$M$ belongs to $U$.  There is no principal difficulty in extending the construction to finitely many setoids of atoms. \changenote{Added remark about several setoids of atoms.}

\subsection{$V$-representable setoids} \label{V_represent}

We consider here for simplicity only pure sets, thus let $V=V(\emptyset)$.
For each $u:V$ define the setoid $$B(u) = (|B(u)|, =_{B(u)})$$ of elements of $V$ belonging to $u$ by letting
$$|B(u)| = \Sigma z:V. z \in_V u$$
and
\begin{equation} \label{Meq}
(z,p) =_{B(u)} (z',p') \Longleftrightarrow z=_V z'.
\end{equation}
Note that for a set $u=\sup(a,f)$, it holds that $$B(\sup(a,f))\cong (T(a), \sim_f)$$ where
$$x \sim_f x' \Longleftrightarrow f(x) =_V f(x').$$
We define therefore 
$$R(\sup(a,f)) = (T(a), \sim_f).$$
It is thereby easy to find the setoid and its underlying type from the set.
A setoid $A$ is {\em $V$-representable} iff there is some $u:V$ and a bijection $\phi:A \cong R(u)$.
Let $u=\sup(a,f)$ and $v=\sup(b,g)$.
If we examine $${\rm Ext}(R(u), R(v)),$$ the standard construction of the setoid of functions  from 
$R(u)$ to $R(v)$,
it has the underlying type
\begin{equation} \label{fntype}
\Sigma h: T(a) \to T(b).(\forall x,y:T(a) (fx =_V fy \Rightarrow g(hx) =_V g(hy)))
\end{equation}
and equality $\sim$ defined by
$$(h,p) \sim (h',p') \text{ iff } \forall x:T(a). g(hx) =_V g(h'x).$$
Let $F_{u,v}$ denote the type in (\ref{fntype}). Define 
$$\gamma(h,p)= \sup(a, \lambda x. \langle fx, g(hx) \rangle)$$
which gives the graph of the function $h$, when $(h,p) : F_{u,v}$. Suppose that the type $F_{u,v}$ has a code $\varphi_{u,v}$ in $U$ so that
$F_{u,v}=T(\varphi_{u,v})$. Now we can form
$$v^u=\sup(\varphi_{u,v},\gamma),$$
which is the set all of functions from $u$ to $v$. Indeed we have
$$z \in_V v^u \text{ iff }  z \text{ is a total and functional relation from $u$ to $v$},$$
where the latter can be formally expressed as the conjunction of the following statements
$$(\forall t \in V)(t \in_V z \Rightarrow (\exists x,y \in V )(x \in_V u \land y \in_V v \land t=_V \langle x,y\rangle)),$$
$$(\forall x \in V)(x \in_V u \Rightarrow (\exists y \in V)(y \in_V v \land \langle x,y \rangle \in_V z)),$$
$$(\forall x,y,y' \in V)(\langle x,y \rangle \in_V z \land \langle x,y' \rangle \in_V z \Rightarrow y=_V y').$$
Note that these are the interpretations of the corresponding first-order CZF formulas in the structure 
$(V,=_V,\in_V)$. We have the following bijective correspondence
\begin{prop} For any $u=\sup(a,f),v=\sup(b,g) \in V$, there is a bijection
$$\psi: R(v^u) \to {\rm Ext}(R(u),R(v))$$
given by $\psi(h,p)= (h,p)$. \qed
\end{prop}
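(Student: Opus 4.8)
The plan is to exploit that $\psi$ is literally the identity on underlying elements, so that the content of the proposition reduces entirely to showing that the two setoid equalities agree. First I would note that by construction $v^u = \sup(\varphi_{u,v},\gamma)$ with $T(\varphi_{u,v}) = F_{u,v}$, so that $R(v^u)$ and ${\rm Ext}(R(u),R(v))$ share the very same underlying type $F_{u,v}$; this is exactly why the prescription $\psi(h,p) = (h,p)$ is well-typed. Consequently $\psi$ is a bijection on underlying types with the identity as candidate inverse, and the whole burden becomes verifying that $\psi$ and $\psi^{-1}$ are extensional, that is, that for all $(h,p),(h',p'):F_{u,v}$,
$$(h,p) =_{R(v^u)} (h',p') \Longleftrightarrow (h,p) =_{\rm ext} (h',p').$$

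Next I would unfold both sides. By the construction of ${\rm Ext}$, the right-hand relation is pointwise equality $\forall x:T(a).\, g(hx) =_V g(h'x)$. The left-hand relation is $\gamma(h,p) =_V \gamma(h',p')$, i.e.
$$\sup(a, \lambda x.\langle fx, g(hx)\rangle) =_V \sup(a, \lambda x.\langle fx, g(h'x)\rangle).$$
Expanding $=_V$ on these two sets and applying the standard fact that $\langle s,t\rangle =_V \langle s',t'\rangle$ holds iff $s =_V s'$ and $t =_V t'$, this reduces to the bisimulation condition that every $x:T(a)$ admits some $x':T(a)$ with $fx =_V fx'$ and $g(hx) =_V g(h'x')$, together with the symmetric statement.

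The forward implication is immediate: given pointwise equality, for each $x$ I take $x'=x$ and combine reflexivity of $fx$ with the hypothesis to match the corresponding pairs, and the symmetric half is identical. The backward implication is the crux, and it is precisely here that the extensionality proofs carried by the elements of $F_{u,v}$ are indispensable. Given the bisimulation and a fixed $x$, I obtain $x'$ with $fx =_V fx'$ and $g(hx) =_V g(h'x')$; from $fx =_V fx'$ and the extensionality witness $p'$ belonging to $(h',p')$ I conclude $g(h'x') =_V g(h'x)$, and transitivity then yields $g(hx) =_V g(h'x)$, as required. Having established the equivalence of the two equalities in both directions, $\psi$ together with the identity inverse is a setoid isomorphism, which proves the claimed bijection. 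I expect this appeal to extensionality — passing from a merely bisimilar graph back to genuine pointwise agreement of the functions — to be the only nontrivial step, everything else being bookkeeping about the shared underlying type and the pairing lemma.
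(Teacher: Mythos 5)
Your proof is correct and is essentially the argument the paper intends: the proposition is stated with no written proof (it is relegated to the Coq formalization as a straightforward verification), and that verification is exactly your unfolding --- both setoids share the underlying type $F_{u,v}$, the forward direction is reflexivity plus the pairing characterization of $\langle s,t\rangle =_V \langle s',t'\rangle$, and the backward direction uses the extensionality witness $p'$ carried by $(h',p')$ to pass from bisimilarity of graphs to pointwise agreement. Your identification of that last appeal to extensionality as the only substantive step is accurate.
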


Actually we have arrived at the standard definition of the function set by analyzing representable sets and
functions.

\subsection{Two isomorphic categories}
\changenote{added emphasis on how the first-order CZF-language is used.}

The internal category of sets in $V$ may be described as follows.
Define the category ${\mathcal V}$ to have as objects ${\mathcal V}_0$  the setoid $V=(V,=_V)$.  The arrows ${\mathcal V}_1$
has as underlying type 
$$\Sigma u \in V. {\rm Isarrow}(u)$$
where ${\rm Isarrow}(u)$ is the predicate definable using CZF formulas
$$\exists a, b, f  \in V. u =_V \langle \langle a,b \rangle, f\rangle \land 
\text{ $f$ is a total and functional relation from $a$ to $b$}.$$
Equality $(u,p) =_{{\mathcal V}_1} (u',p')$ is defined to be $u=_V u'$.
The setoid ${\mathcal V}_2$ of composable arrows has for underlying type
$$\Sigma w \in V. \Sigma u,v \in {\mathcal V}_1. w =_V \langle \pi_1(u),\pi_1(v) \rangle \land {\sf cod}\, u =_v {\sf dom}\, v$$
and its equality is given by $(w,p) \sim (w',p')$ iff $w=_Vw'$.
Again these can be given by straightforward intepretations of first-order CZF-formulas.
Composition ${\sf cmp}$ of arrows is obtained by composition of relations in the usual set-theoretic way.

\begin{thm} \label{V_is_cat} 
${\mathcal V}$ is a category. \qed
\end{thm}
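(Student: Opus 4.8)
The plan is to verify that $\mathcal{V}$ satisfies the essentially algebraic axioms 1--9 of Section 4, by exhibiting the six structure maps as extensional functions and then checking each axiom. The guiding observation is that every equality on $\mathcal{V}_0$, $\mathcal{V}_1$ and $\mathcal{V}_2$ is $=_V$ on the underlying sets, with the proof components (of ${\rm Isarrow}$ and of the composability condition) irrelevant; hence the whole verification reduces to facts about set-theoretic operations and their congruence under $=_V$. Since these operations and the axioms are interpretations in $(V,=_V,\in_V)$ of the first-order CZF development of the category of sets, and this structure validates CZF by Aczel's construction \cite{A} (with bounded separation available because $\in_V$ and $=_V$ are propositions, cf.\ Lemma \ref{smallness}), the axioms will hold in $\mathcal{V}$ once the underlying CZF facts are established.

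First I would fix the structure maps. Writing $\pi_1 u = \langle\langle a,b\rangle, f\rangle$ for the underlying set of an arrow $u$, set ${\sf dom}(u) = a$ and ${\sf cod}(u) = b$; let ${\sf id}(a) = \langle\langle a,a\rangle, \Delta_a\rangle$ with $\Delta_a = \{\langle x,x\rangle : x \in_V a\}$; let ${\sf fst}$ and ${\sf snd}$ return the two arrows of a composable pair; and let ${\sf cmp}$ send a composable pair $\langle u, v\rangle$ to $\langle\langle a, c\rangle, g \circ f\rangle$, where $v$ has underlying set $\langle\langle b, c\rangle, g\rangle$ and $g \circ f$ is set-theoretic composition of the functional relations. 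For each map I would check two things: that it lands in the correct setoid, and that it is extensional. Landing in $\mathcal{V}_1$ amounts to the CZF facts that $\Delta_a$ is a total functional relation from $a$ to $a$ and that the composite of two total functional relations is again one; extensionality amounts to congruence of pairing, projection, diagonal-formation and relation composition with respect to $=_V$, together with independence from the irrelevant proof components.

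Next I would dispatch the axioms. Axioms 1--$4\frac{1}{2}$ are immediate from the definitions, e.g.\ ${\sf dom}({\sf id}(x)) = x$ directly, and ${\sf cod}({\sf cmp}(u)) = {\sf cod}({\sf snd}(u))$ by construction of the composite's signature, while ${\sf cod}({\sf fst}(u)) = {\sf dom}({\sf snd}(u))$ is exactly the composability condition built into $\mathcal{V}_2$. Axiom 5 holds because a composable pair is coded by $w =_V \langle \pi_1({\sf fst}\,u), \pi_1({\sf snd}\,u)\rangle$, so ${\sf fst}$ and ${\sf snd}$ jointly determine $w$ up to $=_V$; axiom 6 is witnessed, given $f,g$ with ${\sf dom}(f) = {\sf cod}(g)$, by the pair-set $\langle \pi_1 g, \pi_1 f\rangle$, which meets the composability condition. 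The identity laws 7 and 8 reduce to the unit laws $\Delta \circ f = f$ and $f \circ \Delta = f$, and the associativity axiom 9 to associativity of relation composition.

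The main obstacle I anticipate is not conceptual but lies in the load-bearing set-theoretic lemmas underlying ${\sf cmp}$: that the composite of total functional relations is total and functional (so that ${\sf cmp}$ is well-defined into $\mathcal{V}_1$ and so that the witness of axiom 6 composes), and that this composition is associative with $\Delta$ as a two-sided unit (axioms 7--9). These must be proved using only CZF-admissible reasoning---bounded separation, pairing, union and replacement---so that they hold in $V$ by soundness; once they are in place, the remaining extensionality checks are routine congruence bookkeeping, with axiom 9 the most tedious to write out in full.
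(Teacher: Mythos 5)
Your proposal is correct and takes essentially the same approach as the paper: the paper offers no written proof (the theorem is checked in its Coq formalization), but the intended argument, signalled by the surrounding text, is exactly yours --- the setoids ${\mathcal V}_0$, ${\mathcal V}_1$, ${\mathcal V}_2$ and the structure maps are interpretations of first-order CZF formulas in $(V,=_V,\in_V)$, with ${\sf cmp}$ given by set-theoretic composition of functional relations, so the essentially algebraic axioms 1--9 of Section 4 reduce to standard CZF facts (closure of total functional relations under composition, unit and associativity laws, congruence of pairing and projections under $=_V$), all available in $V$ since it models CZF and equality on each ${\mathcal V}_i$ is proof-irrelevantly $=_V$ on underlying sets. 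Your explicit treatment of the structure maps, their extensionality, and the axiom-by-axiom verification supplies precisely the ``straightforward'' details the paper delegates to Coq.
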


A different category is constructed using the method of Section \ref{catfromfam}.
We extend $R(\cdot)$ to a family of setoids $\bar{R}$ over the setoid $V=(V,=_V)$.
Let $\bar{R}(\alpha) = R(\alpha)$ for $\alpha \in V$. For a proof object $p$
for $\sup(a,f) =_V \sup(b,g)$, or equivalently, for
$$\forall x:T(a).\exists y:T(b). f(x) =_V g(y) \land  \forall y:T(b).\exists x:T(a). f(x) =_V g(y),$$
we thus have
$$ \forall x:T(a).f(x)=_V g(\pi_1(\pi_1(p)(x))) \text{ and }  \forall y:T(b). f(\pi_1(\pi_2(p)(y))) =_V g(y).$$
Let $\bar{R}(p)(x) =  \pi_1(\pi_1(p)(x))$. This defines an extensional function 
$$\bar{R}(p): R(\sup(a,f)) \to R(\sup(b,g)).$$

\begin{lem} $\bar{R}$ is a family of setoids over $(V,=_V)$.
\end{lem}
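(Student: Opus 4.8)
The plan is to verify the three defining conditions (F1)--(F3) of a proof-irrelevant family of setoids, all of which I expect to follow from a single compatibility property of the transport maps $\bar{R}(p)$. Throughout I write $u = \sup(a,f)$, $v = \sup(b,g)$ and $w = \sup(c,h)$, and recall that $R(u) = (T(a), \sim_f)$ where $x \sim_f x'$ means $f(x) =_V f(x')$. The crucial observation is that for any proof $p : u =_V v$ and any $x : T(a)$, the value $\bar{R}(p)(x) = \pi_1(\pi_1(p)(x))$ comes equipped with the companion proof $\pi_2(\pi_1(p)(x))$ witnessing
$$f(x) =_V g(\bar{R}(p)(x)). \qquad (\ast)$$
In words, the transport map always sends $x$ to an index whose $g$-value matches the $f$-value of $x$. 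This same fact re-establishes the extensionality already asserted for $\bar{R}(p)$: if $f(x) =_V f(x')$ then $g(\bar{R}(p)(x)) =_V f(x) =_V f(x') =_V g(\bar{R}(p)(x'))$, so $\bar{R}(p)(x) \sim_g \bar{R}(p)(x')$.

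I would treat (F2) first, as it is the heart of the matter and explains why proof irrelevance holds despite $\bar{R}(p)$ depending on a choice of witness through $\pi_1$. Given two proofs $p, q : u =_V v$, property $(\ast)$ yields $g(\bar{R}(p)(x)) =_V f(x) =_V g(\bar{R}(q)(x))$ for every $x$, hence $\bar{R}(p)(x) \sim_g \bar{R}(q)(x)$, that is $\bar{R}(p)(x) =_{R(v)} \bar{R}(q)(x)$. Therefore $\bar{R}(p) =_{\rm ext} \bar{R}(q)$, which is exactly (F2): the transport map is independent of the chosen proof.

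With (F2) in hand I verify the remaining conditions in their primed forms. For (F1'), take any $p : u =_V u$ and specialize $(\ast)$ to $v = u$ (so $g = f$), obtaining $f(x) =_V f(\bar{R}(p)(x))$; thus $\bar{R}(p)(x) \sim_f x = {\rm id}_{R(u)}(x)$, giving $\bar{R}(p) =_{\rm ext} {\rm id}_{R(u)}$. For (F3'), let $p : u =_V v$, $q : v =_V w$, and let $r : u =_V w$ be arbitrary (taking $r = q \circ p$ recovers (F3)). Chaining $(\ast)$ three times gives
$$h\bigl(\bar{R}(q)(\bar{R}(p)(x))\bigr) =_V g(\bar{R}(p)(x)) =_V f(x) =_V h\bigl(\bar{R}(r)(x)\bigr),$$
so $(\bar{R}(q) \circ \bar{R}(p))(x) \sim_h \bar{R}(r)(x)$, i.e. $\bar{R}(q) \circ \bar{R}(p) =_{\rm ext} \bar{R}(r)$, which is (F3').

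I do not anticipate a genuine obstacle: once property $(\ast)$ is isolated, each condition collapses to transitivity of $=_V$ through the indexing function of the target setoid. The only point that requires care---and the reason the lemma deserves a separate statement---is the proof-irrelevance condition (F2). The map $\bar{R}(p)$ is genuinely defined by selecting a witness $\pi_1(\pi_1(p)(x))$, so it is not syntactically independent of $p$; what rescues the argument is that the equality on $R(v)$ is coarse enough to factor through $g$, collapsing all admissible witnesses to a single element of the setoid.
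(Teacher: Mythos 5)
Your proof is correct and follows essentially the same route as the paper: both isolate the key fact that the proof object $p$ carries a witness $\pi_2(\pi_1(p)(x))$ of $f(x) =_V g(\bar{R}(p)(x))$, and then reduce (F1), (F2), and (F3') to chains of $=_V$-equalities through the indexing functions of the target setoids. Your explicit naming of this fact as $(\ast)$ and the observation that proof irrelevance holds because equality in $R(v)$ factors through $g$ is a nice clarification, but it is the same argument the paper gives.
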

\begin{proof}  The function $\bar{R}(p): \bar{R}(\sup(a,f)) \to \bar{R}(\sup(b,g))$, 
is independent of $p$. Indeed, if $p, p'$ are arbitrary and $x \sim_f x'$, then
$$g(\bar{R}(p)(x)) =_V f(x) =_V f(x') =_V g(\bar{R}(p')(x).$$
This verifies (F2).
If $p: \sup(a,f) =_V \sup(a,f)$, then $f(\bar{R}(p)(x))=_V f(x)$, so  $\bar{R}(p)(x) \sim_f x$. Hence $\bar{R}(p)$ is the identity, 
and (F1) is clear. Finally, we check (F3').
Suppose we have three proof objects $p:\sup(a,f) =_V \sup(b,g)$, $q:\sup(b,g) =_V \sup(c,h)$ and 
$r:\sup(a,f) =_V \sup(c,h)$.
Expanding as above we have $g(\bar{R}(p)(x))=_V f(x)$ and $h(\bar{R}(q)(y))=_V g(y)$ for all $x$ and $y$. Thus
$$h(\bar{R}(q)(\bar{R}(p)(x)))=_V g(\bar{R}(p)(x)) =_V f(x)$$ 
for all $x$. Now the third proof object gives similarly $h(\bar{R}(r)(x)) =_V f(x)$ for all $x$. Hence for all $x$,
$$\bar{R}(q)(\bar{R}(p)(x)) \sim_h \bar{R}(r)(x).$$
Thus $\bar{R}$ is a family of setoids over $(V,=_V)$.
\end{proof}

From the family $(V,\bar{R})$, we may construct the category ${\mathcal C}={\mathcal C}(V,\bar{R})$, as in Section  \ref{catfromfam}
 and, then compare it to the category $\mathcal V$ above. The objects of the two categories are give by the same setoid. 
 Let $F_0:{\mathcal C}_0 \to {\mathcal V}_0$ be the identity map. There is a
bijection
${\mathcal C}_1 \to {\mathcal V}_1$
given by 
$$(a,b,f) \mapsto \langle \langle a, b \rangle, \gamma(|f|,{\rm ext}_f) \rangle.$$
Further, this yields a bijection $F_2: {\mathcal C}_2 \to {\mathcal V}_2$ by letting $F_1$
act on the two component arrows. It is then straightforward to verify that $F_0$, $F_1$ and $F_2$
form a functor which is an isomorphism. We have

\begin{thm} \label{main}
The categories ${\mathcal C}(V,\bar{R})$ and ${\mathcal V}$ are isomorphic. \qed
\end{thm}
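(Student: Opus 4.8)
The plan is to show that the explicitly given triple $(F_0,F_1,F_2)$ is a functor and that each component is a bijection of setoids; since an equality-preserving, componentwise bijective functor has a componentwise inverse which is automatically again a functor, this establishes that $(F_0,F_1,F_2)$ is an isomorphism ${\mathcal C}(V,\bar R)\to{\mathcal V}$. The work thus splits into three tasks: (i) check that $F_0$, $F_1$, $F_2$ are well-defined extensional functions, (ii) check that each is a bijection, and (iii) verify the six functor equations. Since $F_0$ is the identity on the shared object setoid $(V,=_V)$ and $F_2$ is defined by applying $F_1$ to the two component arrows of a composable pair, almost everything reduces to properties of $F_1$.

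For $F_1$, the key observation is that $F_1(a,b,f)=\langle\langle a,b\rangle,\gamma(|f|,{\rm ext}_f)\rangle$ factors through the graph map $\gamma$, so I can import the representability analysis of Section~\ref{V_represent} wholesale. Well-definedness amounts to checking that if $(a,b,f)\sim(a',b',f')$ in ${\mathcal C}_1$, witnessed by $p:a=_V a'$ and $q:b=_V b'$ making the transport square commute, then the two graphs are $=_V$-equal; this holds because, as sets of ordered pairs, a graph depends on the underlying map only up to the setoid equality $\sim$ on ${\rm Ext}$, while $p$ and $q$ exhibit the domains and codomains as equal elements of $V$. Injectivity is the converse: from $\langle\langle a,b\rangle,\gamma\rangle=_V\langle\langle a',b'\rangle,\gamma'\rangle$ one reads off $a=_V a'$ and $b=_V b'$ from the pairing, and equality of the graph sets then yields the commuting square, i.e.\ $\sim$-equality. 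Surjectivity is where the real content sits: an arrow of ${\mathcal V}$ is an element $u$ with ${\rm Isarrow}(u)$, that is $u=_V\langle\langle a,b\rangle,f\rangle$ with $f$ a total functional relation from $a$ to $b$, and I must produce an extensional function $h:R(a)\to R(b)$ whose graph is $f$. This is exactly the surjectivity of the bijection $\psi:R(v^u)\to{\rm Ext}(R(u),R(v))$ established just before; extracting $h$ from the total functional relation $f$ uses the type-theoretic choice principle (via $\Sigma$-elimination) flagged earlier. I expect this reconciliation of set-theoretic relations with type-theoretic functions to be the main obstacle, although it is largely discharged by the preceding proposition.

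It then remains to verify preservation of structure. The equations for ${\sf dom}$ and ${\sf cod}$ are immediate, since $F_0$ is the identity and the domain and codomain of $F_1(a,b,f)$ are recorded as the components $a,b$ of the inner pair. For ${\sf id}$, one checks directly that the graph of the identity extensional function on $R(a)$ is $=_V$-equal to the set-theoretic identity (diagonal) relation on $a$. The composition equation is the most delicate: on the ${\mathcal C}$-side the composite of $(a,b,f)$ and $(c,d,g)$ is $(a,d,\,g\circ\bar R(r)\circ f)$ as characterized in Lemma~\ref{compchar}, where $r:b=_V c$ is the transport forced by ${\sf cod}=_{{\mathcal C}_0}{\sf dom}$, whereas on the ${\mathcal V}$-side ${\sf cmp}$ is composition of relations. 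I would show that the graph of $g\circ\bar R(r)\circ f$ equals, up to $=_V$, the relational composite of the graphs of $f$ and $g$, the point being that $\bar R(r)$ merely transports along the equality of sets $b=_V c$ which makes the relational composite defined in the first place; so set-theoretically no transport is visible and the two composites coincide. The ${\sf fst}$, ${\sf snd}$ and ${\sf cmp}$ equations for $F_2$, as well as its bijectivity, then follow from this together with the definition of $F_2$ via $F_1$ on composable pairs, completing the verification that $(F_0,F_1,F_2)$ is an isomorphism.
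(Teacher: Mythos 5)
Your proposal follows essentially the same route as the paper: the paper also takes $F_0$ to be the identity, $F_1(a,b,f)=\langle\langle a,b\rangle,\gamma(|f|,{\rm ext}_f)\rangle$ via the graph map, and $F_2$ induced by $F_1$ on the component arrows, and then notes that these are bijections forming a functor, hence an isomorphism. Your write-up simply fills in the verifications (extensionality, injectivity/surjectivity via the $\psi$-bijection and $\Sigma$-choice, and the composition equation via Lemma~\ref{compchar}) that the paper declares straightforward and delegates to the Coq formalization.
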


\subsection{CZFU -- constructive sets with urelements} \label{CZFU_section}

The model $V(M)$ in Section \ref{VM} suggests  an axiomatization of CZF with urelements or atoms. 
For an example of a classical set theory with atoms, see e.g.\  \cite{Mo}.
In \cite{A}, a theory called ${\rm CZF}^{\rm I}$, which is CZF extended with a class of individuals, is 
mentioned but the axioms are not detailed in that paper. It is not clear to us whether it is actually a version of the theory presented below. Nevertheless, we propose the following axiomatization of CZF with atoms, CZFU.

The language is that of set theory, with a binary predicate for membership $\in$, extended with unary
predicate $\rm S$, for being a set. Define ${\rm A}(x)= \lnot {\rm S}(x)$. Write $\forall^{\rm S} x ...$ for $\forall x. {\rm S}(x) \Rightarrow ... $ and  
$\exists^{\rm S} x ...$ for $\exists x. {\rm S}(x) \land ... $.

The axioms are the following
\begin{enumerate}[label={\cW 0}(C\arabic*)]
\item $\forall x.{\rm S}(x) \lor {\rm A}(x)$.  Each object is either a set or an atom.

\item $\forall xy. y \in x \Rightarrow {\rm S}(x)$. An object which has an element must be a set.

\item $\forall^{\rm S} x. \forall^{\rm S} y. (\forall z. z \in x \iff z \in y) \Rightarrow x=y$. Sets are determined by their elements.

\item Let $\varphi(x)$ be any formula. Then take set-induction for this formula as an axiom
$$(\forall x. (\forall y \in x. \varphi(x)) \Rightarrow \varphi(x))
\Rightarrow \forall x. \varphi(x).$$
\end{enumerate}
Since atoms have no elements this is actually equivalent to
$$(\forall x. {\rm A}(x) \Rightarrow \varphi(x)) \Rightarrow (\forall^{\rm S} x. (\forall y \in x. \varphi(x)) \Rightarrow \varphi(x))
\Rightarrow \forall x. \varphi(x).$$

\begin{enumerate}[resume,label={\cW 0}(C\arabic*)]
\item Union: $\forall^{\rm S} x. \exists^{\rm S} u. (\forall z. z \in u \iff (\exists y \in x) z \in y)$.

\item Pairing: $\forall x y.\exists^{\rm S} u.(\forall z. z \in u \iff (z =x \lor z=y))$.

\item Bounded separation: Let $\varphi(x)$ be any bounded formula. Then take as an axiom:
$$\forall^{\rm S} u. \exists^{\rm S} v. \forall x. x \in v \iff x \in u \land \varphi(x).$$ 

\item Subset collection: for any formula $\vf$
\begin{equation*}
\begin{split}
\forall a b.\exists^{\rm S} c.\forall u.& (\forall x \in a.\exists y \in b. \vf(x,y,u)) \Rightarrow  \\
&\exists d \in c.  (\forall x \in a.\exists y \in d. \vf(x,y,u)) \land (\forall y \in d.\exists x \in a. \vf(x,y,u))
\end{split}
\end{equation*}

\item Strong collection:  for any formula $\vf$
$$\forall a. (\forall x \in a.\exists y.\vf(x,y)) \Rightarrow 
\exists^S b.  (\forall x \in a.\exists y \in b. \vf(x,y)) \land (\forall y \in b.\exists x \in a. \vf(x,y))
$$

\item[(C10)] Infinity axiom: $$\exists^{\rm S} x. \emptyset \in x \land (\forall y \in x) y^+ \in x.$$
Here $y^+ = \{y, \{y\}\}$.
\end{enumerate}

\noindent If we add the purity axiom (everything is a set) we get a system, which is easily seen to be equivalent to the standard CZF.

\medskip
(Purity): $\forall x. S(x)$.

\begin{thm} \label{CZFUthm}
For any setoid $M=(|M|,=_M)$ belonging to $U$, the set-theoretic universe $V(M)$ is a model of CZFU. 
The model also verifies that there is a set containing all atoms, that is
\begin{equation} \label{atomsformset}
\exists^{\rm S} x. \forall z. z \in x \Longleftrightarrow {\rm A}(z).
\end{equation}
\end{thm}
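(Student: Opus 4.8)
The plan is to interpret the set predicate ${\rm S}(x)$ as ``$x$ is of the form $\sup(a,f)$'' (so that ${\rm A}(x)$ holds exactly for the atoms ${\rm atom}(b)$), and then to verify each axiom by exhibiting, for every existential claim, an explicit element of $V(M)$ together with a proof that the required membership equivalence holds under the interpretations of $\in_V$ and $=_V$. One first checks that ${\rm S}$ is $=_V$-invariant, which is immediate since the two rules defining $=_V$ relate atoms only to atoms and sups only to sups. The structural axioms then come out directly from the inductive definition of $V(M)$: by the elimination rule every element is either a $\sup$ or an ${\rm atom}$, and these are disjoint by no-confusion, giving (C1); since $u \in_V {\rm atom}(b)$ is false by definition, any $x$ with an element must be a $\sup$, giving (C2); and (C3) is just the forward reading of the bisimulation clause defining $\sup(a,f) =_V \sup(b,g)$, since $\forall z.(z \in_V x \iff z \in_V y)$ unfolds to exactly the two surjectivity conditions of that clause.

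For set-induction (C4) I would transfer the type-theoretic induction (elimination) principle of the inductive type $V(M)$. To prove $\forall x.\varphi(x)$ it suffices to treat the two constructors: at $\sup(a,f)$ the induction hypothesis $\forall t\!:\!T(a).\,\varphi(f(t))$ is equivalent, modulo extensionality of $\varphi$ along $=_V$, to $\forall y\,(y \in_V \sup(a,f) \Rightarrow \varphi(y))$, which is precisely the antecedent of the set-induction step; at ${\rm atom}(b)$ that antecedent is vacuously true, so the step yields $\varphi({\rm atom}(b))$ directly. Hence the eliminator produces the required proof of $\forall x.\varphi(x)$.

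The existence axioms follow Aczel's construction \cite{A}, the only new point being that atoms contribute no elements and so are handled vacuously. Pairing (C6) is $\{x,y\}=\sup(\mathbf{2},h)$ for a two-element type $\mathbf{2}$ with code in $U$ and $h$ enumerating $x,y$ (note that pairing is stated for arbitrary objects, so atoms may occur as members of a set); union (C5) of $\sup(a,f)$ is formed by re-indexing over the pairs $(t,s)$ with $t\!:\!T(a)$ and $s$ ranging over the index type of $f(t)$ when $f(t)$ is a $\sup$, and over the empty type when $f(t)$ is an atom; infinity (C10) is the usual $\omega=\sup(N,\,n\mapsto\overline n)$ built from $\emptyset$ and $y^+=\{y,\{y\}\}$, where $N$ is the natural-number type. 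Strong collection (C9) and subset collection (C8) are obtained from the type-theoretic choice coming from $\Sigma$-elimination exactly as in \cite{A}; when the bounding set is an atom the premise is vacuous and one takes $b=\emptyset$. All of these require only that $U$ be closed under $\Sigma$, binary sums, and the empty and natural-number types.

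The genuinely delicate axiom is bounded separation (C7), and this is where I would invoke Lemma~\ref{smallness}: for $u=\sup(a,f)$ and bounded $\varphi$ I set $v=\sup\bigl((\Sigma t\!:\!T(a))\,\varphi(f(t)),\,f\circ\pi_1\bigr)$, reading $\varphi(f(t))$ as a type, and the point is that this index type has a code in $U$ precisely because, $M$ belonging to $U$, every bounded formula is interpreted as a proposition in $U$ (Lemma~\ref{smallness}) and $U$ is closed under $\Sigma$. Finally, the extra claim (\ref{atomsformset}) is witnessed by $\sup(|M|,\lambda b.\,{\rm atom}(b))$, legitimate because $M$ belonging to $U$ gives a code $m$ with $|M|=T(m)$; its members are exactly the $z$ with $z=_V{\rm atom}(b)$ for some $b$, and by (C1) together with the equality rules these are exactly the objects satisfying ${\rm A}(z)$. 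I expect the main obstacle to be (C7) together with subset collection (C8): the former because its correctness rests entirely on the smallness lemma and the propositional (rather than type-valued) reading of $=_V$ and $\in_V$, and the latter because its combinatorics are the most intricate, although the presence of atoms affects neither argument in an essential way.
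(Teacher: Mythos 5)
Your proposal is correct and follows essentially the same route as the paper's own proof: the paper likewise interprets ${\rm S}$/${\rm A}$ structurally, disposes of C1 and C2 directly, defers C3--C6 and C8--C10 to Aczel's standard verification (with atoms handled vacuously), singles out bounded separation (C7) as the axiom whose correctness rests on Lemma~\ref{smallness}, and witnesses (\ref{atomsformset}) by exactly your set $\sup(m,\lambda t.\,{\rm atom}(t))$ with $T(m)=|M|$. You simply spell out more of the details that the paper leaves to the citation of \cite{A}.
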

\begin{proof} The proof is similar to the verification in Aczel's standard set-theoretic model in case of the axioms
C3 -- C6, C8 -- C10. The axioms C1 and C2 are directly verified by the meaning of ${\rm A}$ and $\in_V$. As for axiom C7, bounded
separation, we may use the standard proof once we have noticed that by Lemma \ref{smallness}, $a=_Vb$ and $a \in_V b$ are
in $U$, whenever $M$ is in $U$.

To verify (\ref{atomsformset}) first construct $a=\sup(m,f)$ where $m:U$ is such that $T(m)=M$ and $f:M \to V$ is given by
$f(t) = {\rm atom}(t)$. Then for any $z\in V$, $z \in_V a$ if, and only if, there is $t: T(m) $ such that $z=_V {\rm atom}(t)$, that is
${\rm A}(t)$ is true.

\end{proof}

\section{The implementation in Coq and  applications} \label{coq_implementation}

In our Coq implementation \cite{PW12} we understand setoids in the sense of propositions-as-types, which means that the equality relation takes its truth values in {\tt Set}  or {\tt Type}. This is
in contrast \changenote{More common word ...} to the standard setoids of Coq where the equality relation is {\tt Prop}-valued.
(Cf.\ {\em total setoids} of Barthe {\em et al.} \cite{BCP}.)
We have used the built-in type {\tt Set} to interpret the universe $U$. The setoids belonging to $U$ are therefore setoids based on  {\tt Set} and
called just {\tt setoids}. What we call setoids in this paper is called {\tt Typeoid} in the Coq code and they are based on {\tt Type}. 

The $V$-sets and $V(M)$-sets are constructed using the generalized inductive definitions available for  {\tt Type} of  Coq.  They could as well have been constructed using a general W-type.
In several places record types are used, which corresponds to $\Sigma$-type applications of MLTT.
The following theorems of the paper are formalized: Theorems \ref{cafthm}, \ref{generator}.(d), 
\ref{haspullbacks}, \ref{V_is_cat}, \ref{main},  and \ref{CZFUthm}.

We verify as well the Regular Extension Axiom (REA) \cite{A2} in our Coq implementation. This axiom is crucial for formalizing transfinite inductive definitions in CZF. 
There are important extensions of the REA  \cite{LR} that unfortunately seem difficult to model in the Coq-system, since the system currently lacks the ability to handle general inductive-recursive definition. 

Apart from using a set-theoretic universe to overcome the limitations of the built in equality of intensional type theory, there are also possible practical utilizations in proof development. An application of the kind of implementation presented here is to integrate type-theoretic and set-theoretic methods in proofs. The set-theoretic methods make it possible avoid certain coherence problems that may be difficult to solve in type theory, for instance regarding families of setoids and involved inductive definitions. The type-theoretic methods have the well known advantages with type checking that guide construction of proofs, and a direct computational interpretation. \changenote{Added further motivation}One may develop theorems in CZF (or CZFU) and then translate the first order formulas and proofs into the richer language that is modelled in the Coq implementation. This translation can easily be done automatically, and the development of the CZF theorems could be done in a theorem prover or proof assistant that can handle intuitionistic logic.


\begin{thebibliography}{99}
\bibitem{A} Peter Aczel. The type-theoretic interpretation of constructive set theory. In: A.\ Macintyre, L.\ Pacholski and J.\ Paris (eds.), {\em Logic Colloquium '77. } North-Holland, Amsterdam 1978.

\bibitem{A2} Peter Aczel. The type-theoretic interpretation of constructive set theory: inductive definitions. In: R.B. Marcus, G.J. Dorn, and G.J.W. Dorn (eds.), {\em Logic, Methodology, and Philosophy of Science VII,}  North-Holland, Amsterdam and New York, 1986,  pp.\ 17 -- 49. \changenote{Added reference}

\bibitem{BCP} Gilles Barthe, Venanzio Capretta and Olivier Pons. Setoids in type theory. {\em Journal of Functional Programming} 13(2003), pp. 261--293.

\bibitem{BvdB} Benno van den Berg and Ieke Moerdijk. A unified approach to Algebraic Set Theory. In: S.B.\ Cooper, H.\ Geuvers, A.\ Pillay and J.\ V\" a\"an\"anen (eds.) {\em Logic Colloquium 2006,} Lecture Notes in Logic, Cambridge University Press 2009,
 pp.\ 18 -- 37.

\bibitem{Ca} Aurelio Carboni. Some free constructions in realizability and proof theory. {\em Journal of Pure and Applied Algebra} 103 (1995), pp.\ 117 -- 148. \changenote{Added reference}

\bibitem{Hi} Jason J.\ Hickey. {\em The MetaPRL Logical Programming Environment.} PhD thesis, Cornell University, Ithaca, NY, January 2001.

\bibitem{H94} Martin Hofmann. On the Interpretation of Type Theory in Locally Cartesian Closed Categories
In: {\em Proceedings of Computer Science Logic,} Lecture Notes in Computer Science, Springer, 1994, pp.\ 427 -- 441.


\bibitem{HS} Martin Hofmann and Thomas Streicher. The groupoid
interpretation of type theory.  In: G.\ Sambin and J.\ Smith (eds.) {\em Twenty-five years of constructive type
theory (Venice, 1995),} pp.\ 83 -- 111, Oxford Logic Guides, 36, Oxford
Univ. Press, New York, 1998.

\bibitem{LR} Robert S.\ Lubarsky and Michael Rathjen. On the regular extension axiom and its variants. {\em Mathematical Logic Quarterly}
49 (2003), pp.\ 513 -- 518.

\bibitem{MacL} Saunders Mac Lane. {\em Categories for the Working Mathematician. 2nd ed.} Springer 1997.  \changenote{Added reference}

\bibitem{Ma} Maria E.\ Maietti. Modular correspondences between dependent type theories
and categorical universes including pretopoi and topoi. {\em Mathematical Structures in Computer Science }15 (2005), pp.\ 1089 --1149.  \changenote{Added reference}

\bibitem{Me} Nax P.\  Mendler. {\em Note: An Implementation of Constructive Set Theory in the LEGO System.} Department of Computer Science, Manchester University 1991.

\bibitem{MP} Ieke Moerdijk and Erik Palmgren.  Type Theories, Toposes and Constructive Set Theory: Predicative Aspects of AST.
{\em Annals of Pure and Applied Logic} 114(2002), pp.\ 155 -- 201.

\bibitem{Mo} Yannis N.\  Moschovakis. {\em Notes on Set Theory. Second Edition.} Springer 2006.


\bibitem{P} Erik Palmgren. Proof-relevance of families of setoids and identity in type theory. {\em Archive for Mathematical Logic} 51(2012), pp.\ 35 -- 47.

\bibitem{P12} Erik Palmgren. Constructivist and Structuralist Foundations: Bishop's and Lawvere's Theories of Sets. {\em Annals of Pure and Applied Logic } 163(2012), pp.\ 1384 -- 1399.

\bibitem{PW12} Erik Palmgren and Olov Wilander.   {\tt www.math.su.se/\~{ }palmgren/coq/czf\_and\_setoids} File repository of the implementation described in the present paper.

\bibitem{St93} Thomas Streicher. Investigations into intensional type theory. Habilitation Thesis, Ludwig-Maximilians
Universit\"at, Munich, 1993. {\tt http://www.mathematik.tu-darmstadt.de/\~{ }streicher/} 

\bibitem{T} Makoto Takeyama. Personal communication.

\bibitem{W} Olov Wilander. Constructing a small category of setoids. {\em Mathematical Structures in Computer Science} 22(2012), pp.\ 103 -- 121.

\bibitem{Y} Xin Yu. {\em Formalizing abstract algebra in constructive set theory.} Master's thesis, California Institute of Technology, 2002.

\end{thebibliography}
\end{document}